\newtheorem{theo}{Theorem}[section]
\newtheorem{prop}[theo]{Proposition}
\newtheorem{lemma}[theo]{Lemma}
\newtheorem{coro}[theo]{Corollary}
\newcommand{\CC}{{\cal C}}
\newcommand{\eps}{{\varepsilon}}
\begin{document}
\date{}

\title{Erasure list-decodable codes and Tur\'an hypercube problems}

\author{Noga Alon
\thanks{Princeton University,
Princeton, NJ, USA 
and
Tel Aviv University, Tel Aviv,
Israel.
Email: {\tt nalon@math.princeton.edu}.
Research supported in part by
NSF grant DMS-2154082.}
}

\maketitle
\begin{abstract}
We observe that several vertex Tur\'an type problems for the hypercube
that received a considerable amount of attention in the combinatorial
community are equivalent to questions about erasure list-decodable
codes. Analyzing a recent construction of Ellis, Ivan and Leader,
	and determining the Tur\'an density of certain hypergraph
	augemntations we
obtain improved bounds for some of these problems.
\end{abstract}
\section{Introduction and results}

\subsection{Erasure codes and Tur\'an hypercube problems}

A set $\CC$ of binary vectors of 
length $n$ is a $(d,L)$-list decodable erasure
code of length $n$ (a $(d,L,n)$-code, for short) if  
for every codeword $w$, after erasing any $d$-bits of $w$,
the remaining part of the vector has at most $L$ possible 
completions into codewords of $\CC$. Erasure list-decodable codes
are considered in
\cite{Gu}, see also \cite{BDT} and the references therein. These papers
deal with codes of rate smaller than $1$, that is, the cardinality
of $\CC$ is exponentially smaller than $2^n$. 

Here we consider much denser codes, where the cardinality of $\CC$ is
a constant fraction of all $2^n$ vectors. This range of the parameters
is not very natural from the information theoretic point of view,
but it is equivalent to a problem that received a considerable amount of
attention in the combinatorial community, see \cite{Ko}, \cite{JE},
\cite{AKS}, \cite{HN}, \cite{BLM}, \cite{JT}, \cite{EK}, \cite{EIL}. 
Indeed, $\CC$ is a $(d,L,n)$-code
if and only if it is a subset of vertices of the
discrete $n$-cube $Q_n$ that contains at most $L$ vertices of any 
$d$-dimensional subcube of $Q_n$. In this language, for example, the result of
\cite{Ko}, proved independently in \cite{JE}, is that the maximum possible
cardinality of a $(2,3,n)$-code is $\lceil 2^{n+1}/3 \rceil$. 

\subsection{$(d,2^d-1,n)$-codes}

An intriguing special case of the general problem of determining or
estimating the maximum possible cardinality of $(d,L,n)$-codes is the 
cases $L=2^d-1$ corresponding to codes $\CC$ that contain no full copy of
a $d$-subcube. Here it is more natural to consider the complement
and denote by $g(n,d)$ the smallest cardinality of a subset of
the vertices intersecting
every $d$-subcube. Let $\gamma_d$ denote the limit
$\lim_{n \mapsto \infty} g(n,d)/2^n$ 
(it is easy to see that the limit exists
as for any fixed $d$, 
$g(n,d)/2^n$ is a monotone increasing function of $n$). Trivially,
$\gamma_1=1/2$, and the result of \cite{Ko} and \cite{JE} mentioned above 
is that
$\gamma_2=1/3$.  In \cite{AKS} it is shown that 
$\gamma_d \geq \log_2(d+2)/2^{d+2}$. It has been a folklore
conjecture (see \cite{BLM}) that $\gamma_d=1/(d+1)$  but this is refuted
in a very strong sense in a recent paper 
of Ellis, Ivan and Leader  \cite{EIL}, where it is shown that 
$(\gamma_d)^{1/d} \leq 2^{-1/8+o(1)})$. As mentioned in
\cite{EIL} we observed that their argument
can be improved to show that $(\gamma_d)^{1/d} \leq 2^{-1/2+o(1)}$.  
This is stated in the following proposition.
\begin{prop}
\label{p11}
For every large $k$ and every $n$ there is a subset of less than
a fraction of $2^{-k}$ of the vertices of the $n$-cube that intersects
the set of vertices of any cube of dimension $d=2k + 3 \log_2 k$.
\end{prop}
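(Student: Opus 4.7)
The plan is to construct, for every large $k$ and every $n$, a subset $S \subset Q_n$ of density less than $2^{-k}$ that intersects every $d$-subcube, where $d = 2k + 3\log_2 k$. Because $g(n,d)/2^n$ is monotone non-decreasing in $n$ (as noted above), it suffices to produce such a hitting set for arbitrarily large $n$; the bound then propagates to every smaller $n$ by monotonicity.

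A natural starting point, following the construction of Ellis, Ivan and Leader, is to take $S_0 = \ker A$ where $A$ is a uniformly random $k\times n$ matrix over $\mathbb F_2$. Then $|S_0|/2^n = 2^{-k}$, and $S_0$ meets the subcube $(I,z)$ precisely when the affine equation $A|_I\, y = A|_{[n]\setminus I}\,z$ is solvable, which holds whenever the random $k\times d$ matrix $A|_I$ has full rank $k$. An elementary computation gives $\Pr[\operatorname{rank}(A|_I) < k] \leq 2^{k-d}$; with $d = 2k + 3\log_2 k$ this becomes $k^{-3} 2^{-k}$, so each individual subcube is missed with very small probability.

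The difficulty is that a direct union bound over the $\binom{n}{d}\,2^{n-d}$ subcubes is too weak for large $n$. To handle arbitrary $n$, I would refine the analysis by grouping missed subcubes $(I,z)$ according to $I$ and the rank profile of $A|_I$: for each rank-deficient $I$, the missed $z$'s lie in a structured union of cosets and can be covered by a small auxiliary set $T$. Estimating the total size of $T$ leads to a Tur\'an-type extremal problem for an auxiliary hypergraph --- the "hypergraph augmentation" alluded to in the abstract --- with the target being $|T|/2^n = o(2^{-k})$, so that $S = S_0 \cup T$ has density strictly below $2^{-k}$.

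The crux of the argument, and the source of the improvement over the Ellis--Ivan--Leader bound, is the correct determination of the Tur\'an density of this auxiliary hypergraph. The slack of $3\log_2 k$ in $d$ comes from the polylogarithmic factors absorbed by this Tur\'an estimate and by the rank concentration for $A|_I$ above.
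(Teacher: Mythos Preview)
Your proposal is not a proof, and the sketch it offers diverges from the paper both in construction and in the key idea.

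First, the paper's argument is not based on a global random linear code $\ker A$. It follows the Ellis--Ivan--Leader construction \emph{layer by layer}: in each Hamming-weight level $r$ one takes (a blow-up of) the $r$-uniform hypergraph on $F_2^{r+k}\setminus\{0\}$ whose edges are the linearly independent $r$-sets; this has edge density exceeding $1-2^{-k}$, and one shows it contains no copy of the daisy $D_r(s,t)$ with $s=k+2\log_2 k$, $t=2k+3\log_2 k$. The hitting set is then the complement of this construction in each layer. The single new ingredient yielding the improvement from $d\approx 8k$ to $d=2k+3\log_2 k$ is the \emph{Plotkin bound}: a copy of $D_r(s,t)$ would produce $t$ vectors in $F_2^{k+s}$ with every $s$ of them independent, hence a binary code of length $t$, minimum distance $\geq s+1$, and dimension $\geq t-(k+s)=\log_2 k$, which Plotkin forbids for these parameters. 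The ``hypergraph augmentation'' results you invoke from the abstract (Propositions~\ref{p14} and~\ref{p15}) are used for Proposition~\ref{p12} (lower bounds on $L(d)$), not here; they play no role in the proof of Proposition~\ref{p11}.

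Second, your own outline has a genuine gap and an arithmetic slip. You correctly observe that a union bound over all $\binom{n}{d}2^{n-d}$ subcubes fails, but your proposed fix---``group missed subcubes by rank profile, cover them by an auxiliary set $T$, reduce to a Tur\'an problem''---is not carried out, and it is not clear what the auxiliary hypergraph would be or why its density is $o(2^{-k})$. Moreover, if $A$ has full rank then $|S_0|/2^n=2^{-k}$ \emph{exactly}, so any nonempty $T\not\subset S_0$ gives $|S_0\cup T|/2^n\geq 2^{-k}$, contradicting your target of strictly less than $2^{-k}$; at minimum you would need to start from $\ker A$ with $A$ of size $(k+1)\times n$ or similar. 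As written, the proposal is a heuristic rather than a proof, and it misidentifies where the quantitative gain comes from.
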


\subsection{Codes of positive density}

Another range of the parameters of $(d,L,n)$-codes that has been studied
in quite a few combinatorial papers deals with the minimum possible
$L=L(d)$ so that there exists infinitely many $(d,L,n)$-codes of positive
density. More precisely, let $L(d)$ denote the
smallest possible $L$ so that there exists an $\eps=\eps(d)>0$ such
that for every $n$ there is a $(d,L,n)$-code of cardinality at least
$\eps 2^n$. The problem of determining or estimating $d(L)$ is considered
in \cite{JT} (where it is denoted by $\mu(d)$.)  A conjecture suggested
in \cite{BLM} asserts that $L(d)= {d \choose {\lfloor d/2 \rfloor}}$.
It is easy to see that this is always an upper bound for $L(d)$, 
and that it 
holds as equality for $d \leq 3$. However, somewhat surprisingly
this conjecture too is refuted by the recent construction of \cite{EIL}
which shows that $L(d)$ is at most $(5/6){d \choose {\lfloor d/2 \rfloor}}$
for every $d \geq 4$.  The authors of \cite{JT} proved a lower bound
for $L(d)$, showing that it is at least $t_2(d)$ + $t_3(d)$, where
$t_2(d)$ is $0$ if $\lceil d/3 \rceil$ is odd and $1$ otherwise,
and $t_3(d)$ is $3^{d/3}$ for $d \equiv 0 \bmod 3$, is
$4 \cdot 3^{(d-4)/3}$ for $d \equiv 1 \bmod 3$ and is
$2 \cdot  3^{(d-2)/3}$ for $3 \equiv 2 \bmod 3$. In particular, this 
shows that 
$$
L(5) \geq 7, L(6) \geq 10, L(7) \geq 12,
L(8) \geq 18, L(9) \geq 27, L(10) \geq 37.
$$
Here we improve the lower bounds for all $d \geq 5$, proving,
in particular, the following
\begin{prop}
\label{p12}
$$
L(5) \geq 8, L(6) \geq 12, L(7) \geq 20, L(8) \geq 32, L(9) \geq 48, 
L(10) \geq  80.
$$
\end{prop}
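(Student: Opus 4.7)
The six claimed bounds all follow at once from the single recursive inequality
\[
  L(d) \;\geq\; 4\, L(d-3) \qquad (d \geq 5),
\]
together with the classical values $L(2)=2$, $L(3)=3$, $L(4)=5$: one has $4L(2)=8$, $4L(3)=12$, $4L(4)=20$, and the bounds for $d\in\{8,9,10\}$ iterate from the new values $L(5)\geq 8$, $L(6)\geq 12$, $L(7)\geq 20$. The plan is therefore to establish this recursion.

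Suppose, for contradiction, that $A \subseteq \{0,1\}^n$ is a $(d, 4L(d-3)-1, n)$-code of density at least $\varepsilon>0$, with $n$ large. Fix an arbitrary triple of coordinate directions $T=\{i_1,i_2,i_3\}\subseteq[n]$, and for each $b\in\{0,1\}^{[n]\setminus T}$ let $F^T_b=\{v\in\{0,1\}^T:(v,b)\in A\}$. Every $(d-3)$-subcube $C'\subseteq\{0,1\}^{[n]\setminus T}$ produces the $d$-subcube $\{0,1\}^T\times C'$ of $\{0,1\}^n$, and the code condition reads $\sum_{b\in C'}|F^T_b|\leq 4L(d-3)-1$. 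Partition $\{0,1\}^{[n]\setminus T}$ by fiber type, $B^T_S=\{b:F^T_b=S\}$. For every $S$ with $|S|\geq 4$, the constraint forces $|B^T_S\cap C'|\leq L(d-3)-1$; hence $B^T_S$ is a $(d-3,L(d-3)-1,n-3)$-code and must have density $o(1)$ as $n\to\infty$ by the very definition of $L(d-3)$.

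Summing over the (bounded number of) $S$ with $|S|\geq 4$, the heavy-fiber union has density $o(1)$, so heavy fibers contribute only $o(1)$ to $|A|/2^n$. Light fibers have size at most $3$ and so contribute at most $3\cdot 2^{n-3}$, giving $\varepsilon\leq 3/8+o(1)$. The genuine content of the proof --- the ``Tur\'an density of a hypergraph augmentation'' mentioned in the abstract --- is the additional step needed to push $\varepsilon$ from $3/8$ down to $0$. I would accomplish this by averaging the fiber analysis over all $\binom{n}{3}$ choices of $T$ and using a second-moment (Cauchy--Schwarz) estimate on $\sum_b|F^T_b|^2$, in order to locate, for some triple $T$ and some $k\in\{1,2,3\}$, a positive-density level set $B^T_k$ that itself is a $(d-3, L(d-3)-1, n-3)$-code --- contradicting the definition of $L(d-3)$.

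\emph{Main obstacle.} The low-fiber regime is the crux: the naive fiber decomposition only yields $\varepsilon\leq 3/8$, and improving this to $\varepsilon=0$ requires correlating the weighted constraints $\sum_{b\in C'}|F^T_b|\leq L$ coming from all $\binom{n}{3}$ triples $T$ simultaneously, exploiting that $b\mapsto F^T_b$ comes from a single globally defined $A$. Identifying the precise augmentation that produces the constant $4$ in the recursion is what I expect to be the main technical challenge.
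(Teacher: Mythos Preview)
Your proposal is an outline, not a proof: you explicitly flag the decisive step as an unresolved ``main obstacle,'' and the sketch you give for it does not work. From $\sum_{b\in C'}|F^T_b|\le 4L(d-3)-1$ one obtains only $|B^T_k\cap C'|\le\lfloor(4L(d-3)-1)/k\rfloor$, and for each $k\in\{1,2,3\}$ this is strictly larger than $L(d-3)-1$; hence none of the light level sets is forced to be a $(d-3,L(d-3)-1,n-3)$-code, and no contradiction arises. Averaging over the $\binom{n}{3}$ triples $T$ or passing to second moments of the fiber sizes does not repair this: the obstruction is structural, not a matter of selecting a good $T$. In fact the inequality $L(d)\ge 4L(d-3)$, if true, is \emph{strictly stronger} than what the paper proves---already at $d=13$ it would yield $L(13)\ge 4\cdot 80=320$, whereas the paper's bound gives only $g(13)=272$---so it cannot be a repackaging of the paper's argument either.

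The paper takes a completely different, non-recursive route. It shows $L(d)\ge g(d)$ where $g(d)=\max\sum_i\prod_{j\ne i}a_j$ over integer partitions $d=a_1+\cdots+a_{k+1}$, by observing that $g(d)$ is exactly the number of edges of the complete $(k+1)$-partite $k$-uniform hypergraph $H$ on classes of sizes $a_1,\dots,a_{k+1}$, and then invoking Proposition~\ref{p15}: the Tur\'an density of the augmentation $H(r)$ tends to $0$, so any positive-density family in $Q_n$ contains a copy of $H(r)$ inside some $d$-subcube. The six stated bounds then drop out of evaluating $g(d)$ for $5\le d\le 10$ (optimal partitions $2{+}2{+}1$, $2^3$, $2^3{+}1$, $2^4$, $2^4{+}1$, $2^5$). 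The numerical coincidence $g(d)=4g(d-3)$ in this range is what makes your proposed recursion match the target values, but it is not the mechanism of the proof and it fails for larger $d$.
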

For large $d$ we prove that 
$L(d) \geq d \cdot 3^{(d-6)/3}$ for $d$ divisible
by $3$ and obtain a similar bound for $d$ that is not divisible by $3$.
The improved lower bounds are obtained by applying the simple result about
graph and hypergraph augmentations described in the following subsection.

We also improve the upper bounds as follows:
\begin{theo}
\label{t13}
$$
L(5)=8, L(6) \leq 16, L(7) \leq 28
$$
and for large $d$, 
$$
	L(d) \leq (c+o(1)) {d \choose {\lfloor d/2 \rfloor}}
$$
where  
$$
	c= \lim_{t \mapsto \infty} \frac{(2^t-2)(2^t-4) \ldots (2^t-2^{t-1})}
	{(2^t-1)^{t-1}}
$$
is roughly $0.29$.
\end{theo}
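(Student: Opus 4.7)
The plan is to improve on the construction of Ellis, Ivan and Leader (which gives $L(d) \le (5/6)\binom{d}{\lfloor d/2\rfloor}$) to reach the asymptotically better constant $c$. I would handle the asymptotic bound and the small cases separately.

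For the asymptotic bound I would use a family of codes parameterized by an integer $t$, aiming to produce a code $\CC_t \subseteq \{0,1\}^n$ whose density (as $n \to \infty$) tends to
$$
c_t = \prod_{j=1}^{t}(1 - 2^{-j}) = \frac{|GL_t(\mathbb{F}_2)|}{2^{t^2}},
$$
and whose intersection with any $d$-subcube is at most $(c_t + o(1))\binom{d}{\lfloor d/2\rfloor}$; since $c_t \to c$, choosing $t = t(d)$ to grow slowly with $d$ would give $L(d) \le (c + o(1))\binom{d}{\lfloor d/2\rfloor}$. The identification of $c_t$ with the density of invertible $t \times t$ matrices over $\mathbb{F}_2$ suggests the shape of the construction: partition $[n]$ into blocks, regard a codeword as a sequence of vectors in $\mathbb{F}_2^t$, and impose that a specified $t$-tuple of blocks forms a basis of $\mathbb{F}_2^t$ (most likely in a ``rolling'' or randomized version, so that the condition is effectively uniform across all block-locations and so can handle every $d$-subcube).

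The crux of the argument is then the uniform bound $|\CC_t \cap S| \le (c_t + o(1))\binom{d}{\lfloor d/2\rfloor}$ for every $d$-subcube $S$. I would establish this by decomposing $S$ according to which coordinates are free inside each block: on blocks that $S$ leaves fully free, averaging over the basis condition contributes a factor $c_t$; on blocks whose coordinates are partially fixed, the number of ways to complete the assignment to a basis is controlled by a Kruskal--Katona / middle-layer style argument and is maximized, over choices of fixings, by the middle binomial coefficient $\binom{d}{\lfloor d/2\rfloor}$. The main obstacle is showing that an adversarial $d$-subcube cannot simultaneously exploit the slack in both factors, so their product remains $(c_t + o(1))\binom{d}{\lfloor d/2\rfloor}$ rather than blowing up; this is likely where the ``rolling'' variant of the condition and a careful convexity argument on the block-fixation profile become essential.

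For the small cases $d \in \{5,6,7\}$ the bounds $8, 16, 28$ all equal $\tfrac{4}{5}\binom{d}{\lfloor d/2\rfloor}$, a ratio not attained by any $c_t$, so these require separate explicit constructions. For each such $d$ I would exhibit a small base code with the right max-intersection property and extend it to all $n$ by a structured augmentation that preserves both positive density and the intersection bound (a naive tensor product with a trivial factor fails, because it admits $d$-subcubes living entirely in the trivial factor, blowing up the intersection to $2^d$). The lower bound $L(5) \ge 8$ from Proposition \ref{p12} matches the upper bound $L(5) \le 8$, yielding $L(5) = 8$.
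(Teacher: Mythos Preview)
Your proposal has a genuine gap: you are designing a new block-based construction, whereas the paper's point is that no new construction is needed at all --- the \emph{same} Ellis--Ivan--Leader code already achieves the constant $c$, and the improvement from $5/6$ to $c$ comes entirely from a sharper analysis. Concretely, the EIL code is layer-based: for each Hamming weight $r$ one labels every coordinate $i\in[n]$ by a random nonzero vector $v_i\in F_2^r\setminus\{0\}$, and keeps those weight-$r$ vectors whose support labels form a basis of $F_2^r$. After partitioning by weight modulo $d+1$, a $d$-subcube can meet only one weight class $r$; if the fixed part of the subcube has weight $r-k$, the number of codewords in the subcube is exactly the number of $k$-subsets of $d$ given vectors that complete a fixed $(r-k)$-set to a basis --- in other words at most $B(k,d)$, the maximum number of nonsingular $k\times k$ submatrices of a $k\times d$ binary matrix. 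The whole theorem then reduces to estimating $B(d)=\max_k B(k,d)$, which the paper does via a short concavity/Lagrangian argument (the probability that $t$ i.i.d.\ nonzero vectors in $F_2^t$ span is maximized by the uniform distribution, giving $P(t)$), together with the duality $B(k,d)=B(d-k,d)$. This yields $B(d)=(c+o(1))\binom{d}{\lfloor d/2\rfloor}$ and, for small $d$, $B(5)=8$, $B(6)=16$, $B(7)=28$ by direct computation.

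Your block scheme, by contrast, is not pinned down (what exactly is the ``rolling'' condition?), and you yourself flag the obstacle: an adversarial $d$-subcube can straddle blocks arbitrarily, and there is no clean reduction to a single extremal quantity like $B(k,d)$. The Kruskal--Katona heuristic you invoke is not the right tool here; the actual control comes from the Lagrangian of the ``bases'' hypergraph on $F_2^k\setminus\{0\}$. Finally, your plan to handle $d\in\{5,6,7\}$ by separate ad hoc constructions is unnecessary (and the observation that $8,16,28$ all equal $\tfrac{4}{5}\binom{d}{\lfloor d/2\rfloor}$ is a coincidence): these values are simply $B(d)$ for those $d$, and fall out of the same uniform construction and analysis.
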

Note that by the results above the exact values of
$L(d)$ for $1 \leq d \leq 5$ are given by the sequence $1,2,3,5,8$.

\subsection{Graph and hypergraph augmentations}

For a graph $G=(V,E)$ and an integer $r \geq 2$, let the $r$-augmentation
of $G$, denoted by $G(r)$, be the  $r$-uniform hypergraph 
$(V \cup S, \{e \cup S: e \in E\})$, where $S \cap V = \emptyset$ and
$|S|=r-2$. Thus $G(r)$ is obtained from $G$ by adding the same set of
$r-2$ vertices to each edge of $G$. This set is called the stem of $G(r)$.
More generally, for a $k$-uniform hypergraph $H=(V,E)$ and an integer
$r \geq k$, let the $r$-augmentation of $H$, denoted $H(r)$, be the 
$r$-uniform hypergraph 
$(V \cup S, \{e \cup S: e \in E\})$, where $S \cap V = \emptyset$ and
$|S|=r-k$. 

For a fixed $r$-uniform hypergraph $F$ and for an integer $n$ let
$ex(n,F)$ denote the maximum possible number of edges in an
$r$-uniform hypergraph on $n$ vertices that contains no copy of $F$. The
Tur\'an density $\pi(F)$ of $F$ is the limit, 
as $n$ tends to infinity, of the
ratio $ex(n,F)/{n \choose r}$ (it is easy to see that this limit
always exists, and lies in $[0,1]$.)

The recent construction of Ellis, Ivan and Leader in \cite{EIL} 
implies that
if the chromatic number of a graph $G$ satisfies $\chi(G) \geq 4$, then the 
Tur\'an density of $G(r)$ is at least $0.29$ for every $r$. 
(The construction
in \cite{EIL} is described for $G=K_4$, but it is not difficult 
to check that 
it works for every graph $G$ of chromatic number at least $4$).

Here we observe
that if $\chi(G) \leq 3$ then the Tur\'an density of $G(r)$ tends to 
zero as $r$ tends to infinity. This gives a full characterization of
all the fixed graphs $G$ that must appear as links in any
$r$-uniform hypergraph of positive density (with at least $2r+1$ vertices,
say), provided $r$ is sufficiently large. This result has been
proved independently by Robert Johnson \cite{Jo}
\begin{prop}
\label{p14}
For every fixed graph $G$ with chromatic number at most $3$,
the limit of the Tur\'an density of $G(r)$ as $r$ tends to infinity is
$0$.
\end{prop}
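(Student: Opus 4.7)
Since $\chi(G)\le 3$, the graph $G$ embeds in the complete tripartite graph $K_{t,t,t}$ with $t=|V(G)|$, so $G(r)\subseteq K_{t,t,t}(r)$ and it suffices to prove $\pi(K_{t,t,t}(r))\to 0$ as $r\to\infty$.

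The first ingredient is a standard link argument establishing the monotonicity $\pi(K_{t,t,t}(r))\le \pi(K_{t,t,t}(r-1))$. If $H$ is an $r$-uniform hypergraph on $n$ vertices with $|E(H)|>(\pi(K_{t,t,t}(r-1))+\eta)\binom{n}{r}$, then the average over $v\in V(H)$ of the density of the $(r-1)$-uniform link $L(v)$ equals $|E(H)|/\binom{n}{r}$, so some vertex $v$ has link density exceeding $\pi(K_{t,t,t}(r-1))$. For $n$ large this link contains a copy of $K_{t,t,t}(r-1)$, and adjoining $v$ to its stem produces a copy of $K_{t,t,t}(r)$ in $H$. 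Consequently the sequence $\{\pi(K_{t,t,t}(r))\}_{r\ge 2}$ is non-increasing and has a limit $\lambda\ge 0$.

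To show $\lambda=0$, I would argue by contradiction and aim for quantitative decay $\pi(K_{t,t,t}(r))=o_r(1)$. For each large $r$ take an $r$-uniform hypergraph $H_r$ of density close to $\lambda$ that is $K_{t,t,t}(r)$-free. By the averaging step, the typical $(r-2)$-stem $S$ has link $L(S)$ a $K_{t,t,t}$-free graph of density close to $\lambda$ on $N:=n-r+2$ vertices. Erd\H{o}s--Stone forces $\lambda\le 1/2$, and the Erd\H{o}s--Simonovits stability theorem then shows each such $L(S)$ is close (in edit distance) to a complete balanced bipartite graph, with a bipartition $(A(S),B(S))$ of $V(H_r)\setminus S$. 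A consistency argument --- comparing the bipartitions of random stems differing in a single vertex --- produces a global near-bipartition $V(H_r)=A\cup B$ (up to $o(N)$ symmetric difference) such that $(A(S),B(S))$ agrees with the restriction of $(A,B)$ for almost every $S$. This global bipartition then highly constrains the profile $(|e\cap A|,|e\cap B|)$ of each edge $e\in E(H_r)$, forcing $|e\cap A|$ to be close to $r/2$; a Stirling estimate shows the number of $r$-subsets of $V(H_r)$ with near-balanced profile against a balanced bipartition is $O(\binom{n}{r}/\sqrt{r})$, which contradicts $|E(H_r)|\ge(\lambda-o(1))\binom{n}{r}$ for $r$ large.

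The main obstacle is the consistency step --- extracting a single global bipartition of $V(H_r)$ from the stability-derived near-bipartitions of the individual stem links. Stability pins down each $(A(S),B(S))$ only up to $o(N)$ symmetric difference, and one must argue probabilistically that as $S$ varies these local bipartitions align coherently with one global partition rather than varying independently across stems. This is handled by examining the expected overlap of $(A(S),B(S))$ and $(A(S'),B(S'))$ for random stems $S,S'$ that differ in only a single vertex, and iterating.
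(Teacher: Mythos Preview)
Your approach has a genuine gap at the stability step. Erd\H{o}s--Simonovits stability says that a $K_{t,t,t}$-free graph on $N$ vertices is close to a complete balanced bipartite graph only when its edge density is close to the extremal value $1/2$. From the hypothesis $\lambda>0$ you correctly deduce $\lambda\le 1/2$, but nothing forces $\lambda$ to be near $1/2$. If, for instance, $\lambda=0.1$, then a typical link $L(S)$ is a $K_{t,t,t}$-free graph of density about $0.1$, and stability gives no structural information whatsoever: any bipartite graph of that density is $K_{t,t,t}$-free, with no near-balanced bipartition implied. So the argument never gets started unless you already knew $\lambda=1/2$, and there is no evident bootstrapping mechanism to push $\lambda$ downward step by step. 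The consistency step you flag as the main obstacle is also genuinely delicate, but the argument has already failed before reaching it.

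The paper's proof avoids both difficulties with a short direct count. Restrict by averaging to a set $U$ of $2r+1$ vertices still carrying an $\eps$-fraction of the $r$-subsets as edges. In any $(r+1)$-subset $W\subset U$, any three distinct edges automatically form a copy of $K_3(r)$: three $r$-subsets of an $(r+1)$-set omit three distinct vertices $a,b,c$, share the stem $W\setminus\{a,b,c\}$ of size $r-2$, and realise the triangle on $\{a,b,c\}$. A convexity/averaging count over random $(r+1)$-subsets and then over possible stems produces a single $(r-2)$-stem whose link graph on $r+3$ vertices contains more than $\frac{\eps^3}{2}\binom{r}{3}$ triangles. For $r$ large this triangle density forces, by Erd\H{o}s's theorem on $3$-partite $3$-uniform hypergraphs, a copy of $K_{s,s,s}$ in that link, and hence a copy of $K_{s,s,s}(r)$ in $H$. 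No stability and no gluing of local bipartitions are needed.
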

The argument easily extends to augmentations of hypergraphs, 
giving the following
\begin{prop}
\label{p15}
For any fixed 
$k$-uniform hypergraph $H$ in which the set of vertices is 
the disjoint union of $k+1$ subsets, so that every edge contains
at most one vertex in each subset, the Tur\'an density of $H(r)$
tends to $0$ as $r$ tends to infinity
\end{prop}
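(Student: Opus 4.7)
The plan is to adapt the argument of Proposition~\ref{p14}, the special case $k=2$, to general $k$. Fix $\eps>0$; the aim is to show that for all sufficiently large $r$, every $r$-uniform hypergraph $\mathcal{H}$ on $n$ vertices with $|E(\mathcal{H})|\ge\eps\binom{n}{r}$ (and $n$ large enough in terms of $r$, $H$, and $\eps$) contains a copy of $H(r)$.

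First, I would partition $V(\mathcal{H})$ randomly into $k+2$ classes $W_0,W_1,\ldots,W_{k+1}$, placing each vertex in $W_0$ with probability $(r-k)/r$ and in each $W_i$ for $1\le i\le k+1$ with probability $k/((k+1)r)$. Call an edge $f\in E(\mathcal{H})$ of \emph{type $i$} (for $i\in\{1,\ldots,k+1\}$) when $|f\cap W_0|=r-k$, $|f\cap W_i|=0$, and $|f\cap W_j|=1$ for every $j\neq i$ in $\{1,\ldots,k+1\}$. A multinomial computation, using $((r-k)/r)^{r-k}\to e^{-k}$, shows that as $r\to\infty$ the probability that a fixed edge is of type $i$ tends to the positive constant $\beta_k=e^{-k}(k/(k+1))^k$, depending only on $k$. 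Linearity of expectation together with vertex-exposure (Azuma--McDiarmid) concentration then produces a specific partition in which, for every $i$, the number of type-$i$ edges is at least $\tfrac12\beta_k\eps\binom{n}{r}$. Fixing this partition and writing $N=|W_j|$ for $j\ge 1$ (so $N\to\infty$ with $n$), a further averaging, now over stems $S\in\binom{W_0}{r-k}$, together with concentration and a union bound over the $k+1$ layers, yields an $S$ such that, simultaneously for every $i$, the number of type-$i$ edges $f$ with $f\cap W_0=S$ is at least $\alpha N^k$ for some $\alpha=\alpha(\eps,k)>0$. Deleting $S$ from each such edge leaves a $(k+1)$-partite $k$-uniform hypergraph $\mathcal{L}$ on $W_1\cup\cdots\cup W_{k+1}$ whose $i$th ``layer'' (the edges missing $W_i$) has at least $\alpha N^k$ edges.

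The final step is to embed $H$ into $\mathcal{L}$ respecting the part structure; combined with the stem $S$ this will yield $H(r)\subseteq\mathcal{H}$. I expect this to be the hard part: the naive multipartite supersaturation claim is false---for instance, the $3$-partite graph obtained from $K_{N/2,N/2}$ by a random $3$-partition has density $\sim 1/2$ in each pair yet contains no triangle---so positive layer densities alone do not suffice. The proposed remedy is to strengthen the outcome of the previous step via a dependent random choice argument inside $\mathcal{L}$, extracting subsets $A_i\subseteq W_i$ of size $\gg|V(H)|$ for which, in every layer, almost every $(k-1)$-tuple with one vertex in $k-1$ of the $A_j$'s has $\Theta(N)$ extensions to an edge of $\mathcal{L}$ in the remaining $A_j$. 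With this uniform codegree condition in hand, the vertices of $H$ can be embedded greedily one at a time: at each step the image of the next vertex is chosen within the intersection of the codegree neighbourhoods dictated by the already-embedded edges of $H$, and these intersections are nonempty since $H$ is of bounded size while the codegrees are linear in $N$. Attaching the stem $S$ to this copy of $H$ then completes the proof.
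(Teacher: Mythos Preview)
The central gap is the step where you claim to find a single stem $S\in\binom{W_0}{r-k}$ for which all $k+1$ layers are simultaneously dense. Averaging over stems only guarantees, for each $i$ separately, some stem $S_i$ with many type-$i$ edges; nothing forces these stems to coincide, and the ``concentration'' you invoke does not apply --- once the partition is fixed, the counts $d_i(S)$ are deterministic functions of $S$ and may well be supported on disjoint families of stems. (The earlier Azuma step for the partition itself is also suspect: a single vertex can lie in up to $\binom{n-1}{r-1}$ edges, so the Lipschitz constant swamps the mean.) Even granting a stem with all layers dense, you correctly note that the $(k+1)$-partite embedding of $H$ is not automatic and defer it to an unspecified dependent-random-choice argument; that is exactly the hard part, and your outline does not resolve it.

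The paper's argument sidesteps all of this and is much shorter. Average to a set $U$ of $2r+1$ vertices of edge density $\eps$, and observe that any $k+1$ distinct $r$-edges inside an $(r+1)$-set $W\subset U$ automatically form a copy of $K^{(k)}_{k+1}(r)$: writing $e_i=W\setminus\{v_i\}$, the common stem is $W\setminus\{v_1,\ldots,v_{k+1}\}$ and the link edges are precisely the $k$-subsets of $\{v_1,\ldots,v_{k+1}\}$. Convexity over a random $(r+1)$-subset then gives $\Omega_{\eps,k}(1)\binom{2r+1}{r+1}\binom{r}{k+1}$ such augmented simplices in $U$, and a \emph{single} averaging over the $\binom{2r+1}{r-k}$ possible stems yields one stem $S$ whose link contains a positive fraction of all $(k+1)$-subsets as copies of $K^{(k)}_{k+1}$. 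Erd\H{o}s's theorem applied to this $(k+1)$-uniform hypergraph produces a complete $(k+1)$-partite $(k+1)$-uniform subhypergraph with parts of any prescribed size $s$, which is exactly a complete $(k+1)$-partite $k$-uniform hypergraph inside the link of $S$; $H$ then embeds trivially. The key idea your approach misses is to count augmented simplices \emph{before} choosing the stem, so that only one quantity needs to be averaged and the partite structure is delivered by Erd\H{o}s's theorem rather than imposed in advance.
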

\vspace{0.2cm}

\noindent
{\bf Remark:}\, 
By averaging over $r$,  Proposition \ref{p14} implies that for
every fixed $\eps>0$ and every fixed graph $G$ of chromatic number
at most $3$, if $n>n_0(G,\eps)$ then
any family of at least $\eps 2^n$ subsets of $[n]=
\{1,2,\ldots ,n\}$  contains  a copy of $G(r)$ for some $r$. Here, too,
the construction in  \cite{EIL} implies that this is false for graphs
$G$ of chromatic number at least $4$. Similarly, Proposition 
\ref{p15} implies the corresponding result for the hypercube.

\section{Proofs}
\subsection{Augmentations}
\label{ss21}

In this subsection we describe the short proof of Proposition \ref{p14}.
The proof of Proposition \ref{p15} is essentially identical.

Fix an $\eps>0$, suppose $n \geq 2r+1$  and let
$H$ be an $r$-uniform hypergraph on $n$ vertices with at least
$\eps { n \choose r}$ edges. By averaging there is a subset
$U$ of $2r+1$ vertices so that $H$ contains at least
$\eps {{2r+1} \choose r}$ edges in $U$. Let $W$ be a random subset
of size $r+1$ of $U$. The expected number
of edges contained in $W$
is at least $\eps(r+1)$. If $W$ contains $k \geq 3$ edges, then
any collection of $3$ of them gives a copy of $K_3(r)$. Thus we get
${k \choose 3}$ such copies on the set of vertices $W$. By convexity
(assuming, say, $\eps(r+1) >10$)  
this implies that the total number of copies of $K_3(r)$ that are
contained in $U$ is at least 
$$
{{2r+1} \choose {r+1}} \cdot {{\eps(r+1)} \choose 3}.
$$
By averaging over the ${{2r+1} \choose {r-2}}$ possible 
stems we get that there is one common stem for at least
$$
\frac{{{2r+1} \choose {r+1}}}
{{{2r+1} \choose {r-2}}}
\cdot {{\eps(r+1)} \choose 3}> \frac{\eps^3}{2}{r \choose 3}
$$
copies of $K_3(r)$, where here we assumed, say, $r>10/\eps$. 
This gives the existence of a graph $F$ on a subset of $r+3$ 
vertices of $U$ so that $F$ contains more than
$\frac{\eps^3}{2}{r \choose 3}$ triangles and our hypergraph contains
a copy of $F(r)$. By the known results about the Tur\'an density
of $3$-uniform, $3$-partite hypergraphs first proved in \cite{Er},
for every $s$ and every 
sufficiently large $r > r_0(\eps,s)$,
$F$ contains a complete $3$-partite graph $T$ with $s$ vertices in each
vertex class. Since $F(r)$ contains $T(r)$ this completes the proof
of the proposition.  \hfill $\Box$

\subsection{Hitting subcubes}

In this subsection we describe the proof of Proposition \ref{p11}. The proof
is identical to the one in \cite{EIL} with 
one modification, replacing a naive
estimate for the maximum possible number of $k$-wise independent vectors
in $F_2^s$ by the Plotkin bound \cite{Pl}, which is a classical result in 
the theory of Error Correcting Codes. 

For simplicity we omit all floor and ceiling signs whenever these are not
crucial. All logarithms are in base $2$ unless otherwise specified.

Following the notation in \cite{EIL}, for integers $t >s$ and $r > s$,
let $D_r(s,t)$ denote the $r$-uniform hypergraph obtained by adding a stem
of size $r-s$ to every edge of the complete $s$-uniform hypergraph 
$T$ on $t$ vertices. In the notation of the previous subsection
$D_r(s,t)$ is $T(r)$. In \cite{EIL}, Theorem 6, it is proved that   
for every fixed $k$ and every (large) $r$,
the Tur\'an density of $D_r(k,8k+1)$ is at least $1-O(2^{-k})$.
The following lemma provides a quantitative improvement.
\begin{lemma}
\label{l21}
For every fixed (large) integer $k$ and every (large) $r$, 
the Tur\'an density
of $D_r(k+2 \log k, 2k+3\log k)$ is larger than $1-2^{-k}$.
\end{lemma}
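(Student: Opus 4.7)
The plan is to follow the proof of \cite{EIL} almost verbatim, replacing their naive coding-theoretic estimate by the Plotkin bound \cite{Pl} at the single step where bounds on $k$-wise linearly independent vectors in $F_2^s$ enter. I would begin by adopting the \cite{EIL} construction: label each vertex $v \in [n]$ by a vector $\phi(v) \in F_2^k$, and declare an $r$-set $e$ to be an edge of $H$ iff $\sum_{v \in e}\phi(v) \neq 0$; the edge density is then $1-2^{-k}$, matching the threshold claimed in the lemma.

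Suppose toward a contradiction that $H$ contains a copy of $D_r(s, t)$ with $s = k + 2\log k$ and $t = 2k + 3\log k$, with stem $S$ and core $T$. Setting $y := \sum_{v \in S}\phi(v)$, the daisy condition forces every $s$-subset $T' \subseteq T$ to satisfy $\sum_{v \in T'}\phi(v) \neq y$. As in \cite{EIL}, this subset-sum avoidance is then used to extract, from the $t$ labels on $T$, a subcollection of at least $2k+1$ vectors that is $k$-wise linearly independent inside an ambient space $F_2^s$.

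This is where the Plotkin bound enters. A family of $N$ vectors in $F_2^s$ that is $k$-wise linearly independent is the parity-check matrix of a binary linear code of length $N$, codimension $s$, and minimum distance at least $k+1$. Assuming $N \geq 2k+1$, one shortens this code to length exactly $2k+1$: the shortened code has dimension at least $2k+1-s$ and minimum distance still at least $k+1 > (2k+1)/2$, so Plotkin's bound yields at most $2(k+1)/(2(k+1)-(2k+1)) = 2k+2$ codewords. Hence $2k+1-s \leq \log(2k+2) \leq 1+\log k$, which for $s = k + 2\log k$ forces $k \leq 3\log k + O(1)$, a contradiction for large $k$. Therefore the extracted subcollection cannot exist, contradicting the assumed copy of $D_r(s,t)$.

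The main obstacle is the middle step: carrying out the reduction from the subset-sum avoidance condition to a genuinely $k$-wise linearly independent subcollection inside $F_2^s$ (exactly as in \cite{EIL}), and verifying that the $2\log k$ slack in the stem dimension $s$ together with the $3\log k$ slack in the core size $t$ comfortably absorb both the losses incurred in this extraction and the logarithmic slack in the Plotkin calculation. Once that reduction is in place, substituting Plotkin for the naive volume estimate used in \cite{EIL} is the only modification required, and the improvement from an upper bound of order $8k$ on $N$ down to essentially $2k$ is precisely what allows the core size to drop from $8k+1$ to $2k + 3\log k$.
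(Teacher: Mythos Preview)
Your proposal rests on the wrong construction, and as a consequence the ``middle step'' you defer to \cite{EIL} simply does not exist.

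The hypergraph you describe---vertices in $[n]$ labelled by $\phi(v)\in F_2^k$, with an $r$-set an edge iff its label-sum is nonzero---does \emph{not} avoid $D_r(s,t)$ for any fixed $s,t$ once $n$ is large. Take $t$ vertices all carrying the same label $a$; then every $s$-subset $T'\subseteq T$ has $\sum_{v\in T'}\phi(v)=sa$, a single fixed value, and a disjoint stem $S$ of size $r-s$ with $\sum_{v\in S}\phi(v)\neq sa$ is trivially available. So your construction contains the daisy outright. Relatedly, there is no mechanism to extract ``$k$-wise linearly independent vectors in $F_2^s$'' from labels that live in $F_2^k$ with $k<s$; the reduction you attribute to \cite{EIL} is not there.

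The construction actually used (both in \cite{EIL} and in the paper) is different: the vertex set is $F_2^{r+k}\setminus\{0\}$, and an $r$-set is an edge iff it is \emph{linearly independent}. The density of this hypergraph exceeds $1-2^{-k}$. If it contained a copy of $D_r(s,t)$ with stem $S$ (of size $r-s$) and core $T$, then projecting the $t$ core vectors to the quotient $F_2^{r+k}/\mathrm{span}(S)\cong F_2^{k+s}$ yields $t$ vectors in $F_2^{k+s}$ \emph{every $s$ of which are linearly independent}. Viewing these as the columns of a parity-check matrix gives a binary code of length $t$, dimension at least $t-(k+s)=\log k$ (hence at least $k$ codewords), and minimum distance at least $s+1>t/2$. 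Plotkin then bounds the code by $2(s+1)/(2s+2-t)=2(k+2\log k+1)/(\log k+2)<k$ for large $k$, a contradiction. Note that the roles of $s$ and $k$ are the reverse of what you wrote: the relevant property is $s$-wise independence in $F_2^{k+s}$ (giving minimum distance $\geq s+1$), not $k$-wise independence in $F_2^s$, and the $t$ core vectors are used directly---no extraction or shortening step is needed.
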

\begin{proof}
Suppose $r$ is large and consider the hypergraph on a set of 
$2^{r+k}-1$ vertices indexed by the nonzero vectors in $F_2^{r+k}$,
where an $r$-set forms an edge iff it is linearly independent. It is
easy to see that the density of this hypergraph is larger than 
$1-2^{-k}$. We claim that it contains no copy of $D_r(s,t)$ where
$s=k+2 \log k, t=2k+3 \log k$. Indeed, as in the proof in \cite{EIL}, 
the existence of such a set
would give a collection of $t$ binary vectors in $F_2^{k+s}$
so that every subset of $s$ of them is linearly
independent. Let $A$ be the $k+s$ by $t$ matrix whose columns
are these $t$ vectors and consider the linear code whose parity check matrix
is $A$. This is the code consisting of all binary vectors of length $t$
that are orthogonal to every row of $A$. The dimension of this code
is at least $t-(k+s)=\log k$ and hence the number of vectors in it is
at least $k$. However, the minimum distance of this code is at least
$s+1$, since every set of $s$ columns of $A$ is linearly independent.
By the Plotkin bound it follows that the number of vectors in the code
cannot exceed $2\frac{(s+2)}{2s+2-t}<k$, contradiction.
Therefore this hypergraph contains no copy of $D_r(s,t)$. The assertion
of the lemma follows by considering blow ups of this hypergraph, which
for large $r$ hardly change the density.
\end{proof}

Returning to the proof of Proposition \ref{p11}
we apply the lemma and take the
union of the 
complement of the construction it provides 
in every
(large) layer $r$ of the hypercube. In the small layers we simply
take all vertices. This gives a set of vertices of 
the $n$-cube that contains
less than a fraction of $2^{-k}$ of the vertices and intersects every 
copy of $D_r(k+2 \log k, 2k+3 \log k)$. Since every subcube $Q_d$ of 
$Q_n$ of dimension 
$d=2k + 3 \log k$ fully contains a copy of some 
$D_r(k+2 \log k, 2k+3 \log k)$ this
completes the proof of the proposition.  \hfill $\Box$

\subsection{List Erasure Codes}

In this subsection we describe the proofs of the
improved upper and lower bounds
for $L(d)$. The lower bounds follow easily from the results about
graph and hypergraph augmentations proved in subsection
\ref{ss21}. The upper bounds combine the
construction in \cite{EIL} with simple tools from linear algebra
and a computation of the Lagrangians of appropriately defined 
$t$-uniform hypergraphs.

Starting with the proof of the lower bound define, for any integer
$d \geq 2$, $g(d)$ to be the maximum possible value of the
expression
$$
\sum_{i=1}^{k+1} \prod_{j \in [k+1]-i} a_j
$$
where the maximum is taken over  all integers $k \geq 1$
and over all partitions of $d$ of the form $d=a_1+a_2+ \cdots +a_{k+1}$,
where $a_i \geq 0$ are integers. 
Thus, for example, $g(2)=2$ as demonstrated
by the partition $2=2+0$, $g(5)=8$ using the partition 
$5=2+2+1$ and $g(10)=80$ using the partition $10=2+2+2+2+2$.
\begin{lemma}
\label{l22}
For every $d \geq 2$, $L(d) \geq g(d).$
\end{lemma}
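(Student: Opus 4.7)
The plan is to apply the hypercube analog of Proposition \ref{p15} (noted in the Remark at the end of Section 1.4) to any positive-density family $C \subseteq 2^{[n]}$, using a specific $(k+1)$-partite hypergraph that encodes the optimal partition, and then to read off from a copy of its augmentation a $d$-subcube containing at least $g(d)$ points of $C$.

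First, given a partition $d = a_1 + \cdots + a_{k+1}$ achieving the maximum in the definition of $g(d)$, I would define the $(k+1)$-partite $k$-uniform hypergraph $H$ on vertex set $V_1 \sqcup \cdots \sqcup V_{k+1}$ with $|V_i| = a_i$, whose edges are exactly the $k$-subsets that avoid one part (equivalently, pick exactly one vertex from each of the other $k$ parts). A direct count gives
$$
|E(H)| = \sum_{i=1}^{k+1} \prod_{j \in [k+1]-i} a_j = g(d),
$$
and by construction $H$ satisfies the hypothesis of Proposition \ref{p15}: its vertex set is partitioned into $k+1$ parts, and every edge contains at most one vertex in each part.

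Next, fix $\eps > 0$ and let $C \subseteq 2^{[n]}$ satisfy $|C| \geq \eps 2^n$. By the hypercube analog of Proposition \ref{p15} (obtained by averaging over layers of the cube, since the bulk of $2^n$ is concentrated in layers where the density in $C$ remains positive), for $n$ sufficiently large $C$ contains a copy of $H(r)$ for some $r$. Unpacked, this produces a stem $S \subseteq [n]$ with $|S| = r-k$ and an injection $\phi : V(H) \hookrightarrow [n] \setminus S$ such that $\phi(e) \cup S \in C$ for every edge $e \in E(H)$.

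Finally, I would look at the $d$-dimensional subcube
$$
Q = \{\, T \subseteq [n] : S \subseteq T \subseteq S \cup \phi(V(H)) \,\}
$$
of $Q_n$. For each edge $e \in E(H)$, the vertex $\phi(e) \cup S$ lies in $Q \cap C$, and distinct edges produce distinct vertices of $Q$, so $|Q \cap C| \geq |E(H)| = g(d)$. Hence $C$ cannot be a $(d,L,n)$-code for any $L < g(d)$, proving $L(d) \geq g(d)$. The main obstacle is really just the hypercube analog of Proposition \ref{p15}; once one invokes that reduction, the choice of $H$ to match the partition defining $g(d)$ and the identification of its $g(d)$ edges with $g(d)$ vertices of a single $d$-subcube are essentially forced.
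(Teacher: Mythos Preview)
Your proposal is correct and follows essentially the same approach as the paper's proof: define the $(k+1)$-partite $k$-uniform hypergraph $H$ whose edge count equals $g(d)$, invoke the Remark following Propositions~\ref{p14} and~\ref{p15} to find a copy of $H(r)$ inside any positive-density $\CC$, and observe that this copy sits inside a single $d$-subcube. You spell out the last step (the explicit subcube $Q=\{T:S\subseteq T\subseteq S\cup\phi(V(H))\}$ and why the $g(d)$ edges yield $g(d)$ distinct points of $\CC$ in it) more carefully than the paper does, but the argument is the same.
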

\begin{proof}
Fix a small $\eps>0$ and let $\CC$ be a collection of at least
$\eps \cdot 2^n$ vertices of $Q_n$. For a fixed $d \geq 2$ let
$g(d)= \sum_{i=1}^{k+1} \prod_{j \in [k+1]-\{i\}} a_j$
where $a_j \geq 0$ are integers. Note that this
number is exactly the number of edges of the $k$-uniform hypergraph
$H$ on $k+1$ vertex classes of sizes $a_1,a_2, \ldots, a_{k+1}$ whose
edges are all $k$-tuples containing at most $1$ vertex of each class.
(This holds even if some of the numbers $a_i$ are $0$). 
By the remark following Propositions \ref{p14} and \ref{p15}
if $n$ is sufficiently large as a function of $d$ and $\eps$,
then $\CC$ must contain $H(r)$ for some $r$. The desired
result follows as this $H(r)$ is fully contained in some
subcube of dimension $d$ in $Q_n$.
\end{proof}
The assertion of Proposition \ref{p12} follows easily from that of
the last lemma. The bounds for $L(d)$ for $5 \leq d \leq 10$ are obtained
by computing the value of $g(d)$ for these values of $d$. For large $d$
divisible by $3$, say $d=3(k+1)$, it is not difficult to check 
that the value of
$g(d)$ is obtained by the partition $a_1=a_2 = \ldots =a_{k+1}=3$,
implying that $L(d) \geq (k+1)3^k=d \cdot 3^{(d-6)/3}$. \hfill $\Box$
\vspace{0.2cm}

\noindent
We proceed with the proof of the upper bounds for $L(d)$ stated in Theorem
\ref{t13}, starting with several preliminary lemmas. 
For an integer $t \geq 1$, let $P(t)$ denote the probability that
$t$ binary vectors $v_1, v_2, \ldots ,v_t$ in $F_2^t$, each
chosen randomly, uniformly and independently among all $2^t-1$ nonzero
vectors in $F_2^t$, are linearly independent over $F_2$, that is, form
a basis of $F_2^t$. Clearly $P(1)=1$.
Choosing the vectors one by one and multiplying
the conditional probabilities that each vector is not spanned by the
previously chosen ones assuming these are linearly independent, it follows
that
\begin{equation}
	\label{e21}
	P(t)=(\frac{2^t-2}{2^t-1}) \cdot (\frac{2^t-4}{2^t-1}) \cdots
	(\frac{2^t-2^{t-1}}{2^t-1})
	= \frac{(2^t-2)(2^t-4) \cdots (2^t-2^{t-1})}{(2^t-1)^{t-1}}.
\end{equation}
It is not difficult to check that for any $t>1$
\begin{equation}
	\label{e22}
	P(t) = (\frac{2^t-2)}{2^t-1})^{t-1} P(t-1).
\end{equation}
This implies that for any $k$
\begin{equation}
	\label{e23}
	c=\lim_{t \mapsto \infty} P(t)
	= \mbox{inf}_t P(t) \geq P(k)(1-O(\frac{k}{2^k})).
\end{equation}

The equality (\ref{e22}) 
can be verified by induction on $t$, using (\ref{e21}). It 
can also be proved by the following
combinatorial argument that will be useful later too.

The nonzero vectors $v_1,v_2, \ldots ,v_t$ form a basis iff
the following two events $E_1$ and $E_2$ hold. The event $E_1$
is that 
each $v_i$ for $i \geq 2$ is not chosen to 
be equal to $v_1$. It is clear that
its probability is exactly $(\frac{2^t-2)}{2^t-1})^{t-1}$. Given the choice
of $v_1$, each nonzero vector $v$ in $F_2^t$ has a unique expression
as $v=x_v+y_v$, where
$x_v \in \{0,v_1\}$ lies in the space generated by $v_1$, and 
$y_v$ is orthogonal to this space. Let $E_2$ be the
event that the vectors $y_{v_2},y_{v_3}, \ldots ,y_{v_t}$ form 
a basis of the $(t-1)$-dimensional subspace of $F_2^t$ orthogonal
to $v_1$. Conditioning on the event $E_1$, each nonzero
vector of this $(t-1)$-dimensional space is selected with uniform
probability among these $2^{t-1}-1$ possible vectors. These vectors
span the space with probability $P(t-1)$, that is
$Prob[E_2 | E_1]=P(t-1)$. This implies (\ref{e22}) and hence
also gives that the sequence $P(t)$ is monotone decreasing
and thus approaches a limit, which is denoted by $c$ in Theorem
\ref{t13}. It is easy to check that this limit is roughly $0.29$.

We need the following simple result.
\begin{lemma}
\label{l23}
Let $t \geq 1$ and let $\{p_v: v \in F_2^t-\{0\}~ \}$ be 
an arbitrary
probability distribution on the nonzero vectors in $F_2^t$. Then
$$
	\sum_{v \in F_2^t-\{0\}} p_v(1-p_v)^{t-1} 
\leq (\frac{2^t-2}{2^t-1})^{t-1}.
$$
Equality holds for the uniform distribution $p_v=1/(2^t-1)$ for
	all $v \in F_2^t-\{0\}$.
\end{lemma}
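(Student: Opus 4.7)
The plan is to maximize $F(p) := \sum_{v} p_v(1-p_v)^{t-1}$ over the probability simplex on $F_2^t - \{0\}$ via a Lagrange/KKT analysis, and then to verify that the uniform distribution is the unique maximizer. Set $N = 2^t - 1$ and $f(p) = p(1-p)^{t-1}$. By compactness a maximizer $p^*$ exists. The KKT stationarity conditions force $f'(p_v^*) = \lambda$ at every $v$ in the support of $p^*$, and $f'(0) \le \lambda$ wherever $p_v^* = 0$. A direct computation gives $f'(p) = (1-p)^{t-2}(1 - tp)$, so $f'(0) = 1$ and $f'(p) < 1$ for every $p \in (0,1]$. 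Hence if any coordinate of $p^*$ is zero we would need $\lambda \ge 1$ together with some $p_v > 0$ satisfying $f'(p_v) = \lambda \ge 1$, which is impossible; so all $p_v^* > 0$.

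Next I would examine $f''(p) = -(t-1)(1-p)^{t-3}(2 - tp)$, which shows that $f'$ is strictly decreasing on $[0, 2/t]$ and strictly increasing on $[2/t, 1]$. Consequently the equation $f'(p) = \lambda$ admits at most two solutions in $(0,1]$, so $p^*$ takes at most two distinct positive values. If all the values coincide, normalization forces $p_v^* = 1/N$ and
$$F(p^*) = N \cdot \frac{1}{N}\left(1 - \frac{1}{N}\right)^{t-1} = \left(\frac{2^t - 2}{2^t - 1}\right)^{t-1},$$
which is the bound claimed in the lemma.

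The last step is to rule out the two-value case. Suppose $p_v^* \in \{a, b\}$ with $a < b$, where $m$ coordinates equal $a$ and $N - m$ equal $b$, with $1 \le m \le N - 1$. Two distinct roots of $f'(p) = \lambda$ require $\lambda \le 0$ and must straddle the unique minimum of $f'$ at $p = 2/t$; a short check rules out the degenerate case $\lambda = 0$ (where $a = 1/t$, $b = 1$, forcing $m = (N-1)t/(t-1) > N-1$), so $a > 1/t$ and $b > 2/t$ strictly. Substituting these into $ma + (N - m)b = 1$ yields $1 > (2N - m)/t$, i.e.\ $m > 2N - t$. Combined with $m \le N - 1$ this would force $N < t - 1$, contradicting $N = 2^t - 1 \ge t$. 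So the two-value case is infeasible, and the uniform distribution is the unique maximizer.

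The main obstacle is that $f$ is not globally concave---it is concave on $[0, 2/t]$ but convex on $[2/t, 1]$---so a direct Jensen or smoothing argument does not immediately pin down the maximum. The essential quantitative input that disposes of the remaining non-uniform critical configuration is the exponential gap $N = 2^t - 1 \gg t$; without this gap the two-value KKT configuration would in principle be feasible.
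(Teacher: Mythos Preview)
Your argument is correct, and shares with the paper the same preliminary calculus: both compute $f'(p)=(1-p)^{t-2}(1-tp)$ and $f''(p)=-(t-1)(1-p)^{t-3}(2-tp)$ and identify the concave region $[0,2/t]$ and the convex region $[2/t,1]$. After that the two proofs diverge. The paper does not invoke KKT at all; instead it uses a direct perturbation: if some $p_v>1/t$ at a maximizer then, since $2^t-1>t$, some $p_{v'}<1/t$, and shifting $\eps$ of mass from $v$ to $v'$ strictly increases both $f(p_v)$ and $f(p_{v'})$ (as $f$ is increasing on $[0,1/t)$ and decreasing on $(1/t,1]$), contradicting maximality. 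Hence every $p_v\le 1/t$, and a single application of Jensen on the concave interval $[0,1/t]\subset[0,2/t]$ finishes the proof. Your route---full support via $f'(0)=1>\max_{p>0}f'(p)$, then at most two values via the U-shape of $f'$, then the arithmetic $m>2N-t$ versus $m\le N-1$ to kill the two-value configuration---is a clean systematic KKT analysis that yields uniqueness of the maximizer explicitly; it is a little longer but entirely valid. Both arguments ultimately hinge on the same quantitative fact $N=2^t-1\ge t$: in the paper it guarantees $1/N\le 1/t$ so that Jensen lands at the uniform point, and in your proof it is exactly what makes $N-1\ge m>2N-t$ impossible.
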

\begin{proof}
The assertion is trivial for $t=1$. For $t \geq 2$ 
put $g(z)=z(1-z)^{t-1}$. For $t=2$ the second derivative of this
function is $-2<0$ and hence it is concave in $[0,1]$, 
implying the desired result by Jensen's inequality. For $t \geq 3$ 
the derivative and second derivative of $g(z)$ are given by
	$g'(z)= (1-z)^{t-2}(1-tz)$ and $g''(z)=(1-z)^{t-3}(-2t+2+t(t-1)z)$.
Therefore, in $[0,1]$ the function $g(z)$ is increasing in
$[0,1/t)$, attains its maximum at $z=1/t$, and is decreasing
in $[1/t,1]$. It is concave in $[0,2/t)$ and convex in
$[2/t,1]$. Suppose that the sum $\sum_v g(p_v)$ 
considered in the lemma
	attains its maximum at $(p_v: v \in F_2^t-\{0\})$  
(the maximum is clearly
attained, by compactness). If there is some $p_v>1/t$ then since
$2^t-1 > t$ there is also some $p_{v'}<1/t$. Decreasing 
$p_v$ by $\eps$ and increasing $p_{v'}$ by $\eps$, for a sufficiently
	small $\eps>0$, strictly increases 
both $g(p_v)$ and $g(p_{v'})$, contradicting maximality.
Therefore $0 \leq p_v \leq 1/t$ for all $v$. Since the function
$g(z)$ is concave in $[0,1/t]$ the maximum value of $\sum_v g(p_v)$
is obtained when all the values $p_v$ are equal, by Jensen's 
Inequality.  
\end{proof}
\begin{coro}
\label{c24}
	Let $t \geq 1$ and let $\{p_v: v \in F_2^t-\{0\}~ \}$ be 
	an arbitrary
probability distribution on the nonzero vectors in $F_2^t$. Then
the probability that a sequence $v_1,v_2, \ldots ,v_t$ of
$t$ random vectors, where each $v_i$ is
chosen randomly and independently according to this distribution,
forms a basis of $F_2^t$ is at most $P(t)$, where $P(t)$ is
defined in (\ref{e21}). This is tight and obtained by the uniform
	distribution on $F_2^t-\{0\}$.
\end{coro}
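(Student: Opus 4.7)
The plan is to prove Corollary \ref{c24} by induction on $t$, mirroring the combinatorial decomposition that was used to derive (\ref{e22}) in the uniform case. Lemma \ref{l23} will supply the quantitative upper bound at the outer layer, and the inductive hypothesis will handle the inner (quotient space) layer.

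The base case $t=1$ is immediate: any nonzero vector is a basis of $F_2^1$, so the probability equals $1=P(1)$. For the inductive step, I would condition on the value of $v_1$. As in the proof of (\ref{e22}), the event ``$v_1,\ldots,v_t$ is a basis of $F_2^t$'' is the intersection of the event $E_1$ that no $v_i$ with $i\ge 2$ equals $v_1$ (equivalently, none of them lies in $\langle v_1\rangle\setminus\{0\}$) and the event $E_2$ that the images $\bar v_2,\ldots,\bar v_t$ in the quotient $F_2^t/\langle v_1\rangle\cong F_2^{t-1}$ form a basis. By independence, $\mathrm{Prob}[E_1\mid v_1]=(1-p_{v_1})^{t-1}$.

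Conditioned on $v_1$ and on $E_1$, each $v_i$ ($i\ge 2$) is an independent sample from the distribution $p_u/(1-p_{v_1})$ supported on $F_2^t\setminus\{0,v_1\}$; its projection to the quotient is a nonzero vector in $F_2^{t-1}$ distributed according to the induced probability measure $q_w=(p_{u}+p_{u+v_1})/(1-p_{v_1})$ for a choice of preimage $u$ of each nonzero $w$. Since the inductive hypothesis applies to \emph{any} probability distribution on $F_2^{t-1}\setminus\{0\}$, we get $\mathrm{Prob}[E_2\mid v_1, E_1]\le P(t-1)$. Summing over $v_1$ yields
$$
\mathrm{Prob}[\text{basis}]\;\le\;P(t-1)\sum_{v_1\ne 0}p_{v_1}(1-p_{v_1})^{t-1},
$$
and Lemma \ref{l23} bounds the sum by $\bigl((2^t-2)/(2^t-1)\bigr)^{t-1}$. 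Plugging into (\ref{e22}) gives exactly $P(t)$. The uniform distribution on $F_2^t\setminus\{0\}$ achieves equality in Lemma \ref{l23}, and its induced distribution on the quotient is again uniform on $F_2^{t-1}\setminus\{0\}$, so equality propagates through the induction.

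The main thing to be careful about is the quotient-space step: one has to verify that conditioning on $E_1$ really reduces to i.i.d.\ sampling of $t-1$ vectors from \emph{some} distribution on the nonzero vectors of $F_2^{t-1}$ (so that the inductive hypothesis is applicable), and that the two multiplicative factors $\mathrm{Prob}[E_1\mid v_1]$ and $\mathrm{Prob}[E_2\mid v_1,E_1]$ combine correctly after averaging over $v_1$. Once this bookkeeping is in place, Lemma \ref{l23} and the recurrence (\ref{e22}) finish the argument mechanically.
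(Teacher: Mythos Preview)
Your proposal is correct and follows essentially the same approach as the paper: induction on $t$, conditioning on $v_1$, splitting into the events $E_1$ (no later $v_i$ equals $v_1$) and $E_2$ (the images in the $(t-1)$-dimensional quotient form a basis), applying the inductive hypothesis to bound $\mathrm{Prob}[E_2\mid v_1,E_1]$ by $P(t-1)$, and then invoking Lemma~\ref{l23} and (\ref{e22}). Your use of the quotient $F_2^t/\langle v_1\rangle$ is a clean equivalent of the paper's phrasing via a complementary (``orthogonal'') subspace, and your explicit remark that the induced distribution on the quotient is again uniform in the equality case is a nice touch that the paper leaves implicit.
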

\begin{proof}
We apply induction of $t$ together with 
the reasoning described in the derivation of (\ref{e22})
from (\ref{e21}). The result is trivial for $t=1$. Assuming
it holds for $t-1$ we prove it for $t \geq 2$.
Choosing the vectors $v_1,v_2, \ldots ,v_t$
one by one, suppose $v_1=v$ (this happens with probability
$p_v$.)
The vectors $v_1,v_2, \ldots ,v_t$ form a basis iff
no $v_i$ for $i \geq 2$ is equal to $v_1=v$ (denote this event by
$E_1$), and the projections
of the vectors $v_2, \ldots ,v_t$ on the subspace orthogonal
to $v$ form a basis of this subspace (denote this event by $E_2$).
The probability that $v_1=v$ and $E_1$ holds  is $p_v(1-p_v)^{t-1}$.
The conditional probability that given this $E_2$ holds is, by 
the induction hypothesis, at most $P(t-1)$. Summing over $v$ we 
conclude that the probability that $v_1,v_2, \ldots ,v_t$ form
a basis of $F_2^t$ is at most
$$
	(\sum_{v \in F_2^v-\{0\}} p_v(1-p_v)^{t-1}) \cdot P(t-1).
$$
The first factor is at most $(\frac{2^t-2}{2^t-1})^{t-1}$, by
Lemma \ref{l23}. This and (\ref{e22}) establish the desired
inequality for $t$, completing the proof of the induction step
and of the corollary.
\end{proof}

For integers $1 \leq k \leq d$ let $B(k,d)$ denote the maximum possible
number of non-singular $k$ by $k$ submatrices in a $k$ by $d$ matrix
over $F_2$. Therefore, $B(k,d)/{d \choose k}$ is the maximum possible
probability that a set of $k$ distinct columns of such a matrix
forms a basis of $F_2^k$. 

\begin{lemma}
\label{l25}

\noindent
\begin{enumerate}
\item
For any fixed $k$ the function $B(k,d)/{d \choose k}$ is
		monotone decreasing in $d$ for all $d \geq k$, 
		and is at least $P(k)$ for every admissible $d$. 
\item
For any $1 \leq k <d$,  $B(k,d)=B(d-k,d)$.
\item
For $d \geq k^2$
$$
		(P(k) \leq )
\frac{B(k,d)}{{d \choose k}} \leq
\frac{P(k)d^k}{k! {d \choose k}} 
\leq P(k)(1+\frac{k^2}{d-k}).
$$
\item
	For any $2 \leq k \leq d$, 
	$$	
		B(k,d) \leq \lfloor \frac{d B(k-1,d-1)}{k} \rfloor.
		$$
\end{enumerate}
\end{lemma}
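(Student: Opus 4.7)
The plan is to handle the four parts in order; each reduces to a short self-contained argument.

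For~(1), I would take an optimal $k\times d$ matrix $A$. For each column $c\in[d]$ the $k\times(d-1)$ submatrix obtained by deleting $c$ has at most $B(k,d-1)$ non-singular $k\times k$ submatrices, and each non-singular $k$-subset of $A$ survives the deletion of exactly $d-k$ of the $d$ columns. Double counting yields $(d-k)B(k,d)\le d\,B(k,d-1)$, which after using $\binom{d-1}{k}/\binom{d}{k}=(d-k)/d$ becomes the claimed monotonicity. For the $\ge P(k)$ bound I would pick a random $k\times d$ matrix whose columns are i.i.d.\ uniform on $F_2^k\setminus\{0\}$; by~(\ref{e21}) each fixed $k$-subset is non-singular with probability exactly $P(k)$, so the expected number of non-singular $k$-submatrices is $P(k)\binom{d}{k}$ and some realisation achieves at least this. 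Part~(2) is a standard matroid-duality identity: a maximiser $A$ has rank $k$, and if $A^\perp$ is any $(d-k)\times d$ matrix whose row space is the orthogonal complement of $\mathrm{rowspace}(A)$, then a $k$-subset $T$ of columns of $A$ is independent iff the complementary $(d-k)$-subset of columns of $A^\perp$ is independent, so matching maximisers gives $B(k,d)=B(d-k,d)$.

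For~(3), the lower bound is the one proved in~(1). For the upper bound, let $A$ be optimal and let $p_v$ be the fraction of columns of $A$ equal to $v\in F_2^k$. I sample $k$ columns of $A$ uniformly and independently with replacement: for $k\ge 2$ these form a basis only when the indices are distinct and the corresponding $k$-subset is non-singular, so the probability equals $k!\,B(k,d)/d^k$. On the other hand, by Corollary~\ref{c24} applied to the conditional distribution on $F_2^k\setminus\{0\}$ (the zero column of $A$, if any, only decreases the probability), this probability is at most $P(k)$. Rearranging gives $B(k,d)\le P(k)d^k/k!$, which is the middle inequality. The final inequality reduces to $\prod_{i=0}^{k-1}(1-i/d)^{-1}\le 1+k^2/(d-k)$, which follows from $\prod_{i=0}^{k-1}(1-i/d)\ge 1-k^2/(2d)$ together with the estimate $1/(1-x)\le 1+2x$ for $x\le 1/2$; the hypothesis $d\ge k^2$ ensures $x=k^2/(2d)\le 1/2$.

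For~(4), I fix an optimal $A$ and count in two ways. For each column $c\in[d]$ I apply invertible row operations, which do not change which submatrices are non-singular, to bring $A[\cdot,c]$ to $e_1$; zero columns of $A$ belong to no basis and contribute nothing. Then $T=\{c\}\cup T'$ is non-singular iff, by Laplace expansion along the first column, the $(k-1)\times(k-1)$ minor $A[\{2,\ldots,k\},T']$ is non-singular, and this minor sits inside the $(k-1)\times(d-1)$ matrix $A[\{2,\ldots,k\},[d]\setminus\{c\}]$, so there are at most $B(k-1,d-1)$ such $T$ containing $c$. Summing over $c$ and noting each non-singular $k$-subset of $A$ is counted exactly $k$ times gives $k\,B(k,d)\le d\,B(k-1,d-1)$, and integrality of $B(k,d)$ produces the stated floor. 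I do not anticipate any serious obstacle; the only minor subtlety, in~(3), is handling possible zero columns of $A$, which is harmless because a zero vector can never appear in a basis.
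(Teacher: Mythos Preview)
Your proposal is correct and follows essentially the same approach as the paper in all four parts: averaging/double-counting for the monotonicity and for part~(4), the orthogonal-complement (matroid dual) correspondence for part~(2), and sampling columns with replacement together with Corollary~\ref{c24} for part~(3). The only cosmetic differences are that the paper disposes of zero columns in~(3) by noting an optimal $A$ need not contain one (rather than conditioning), and in~(4) phrases the reduction as projection onto the quotient by $\langle c\rangle$ rather than via row operations; your final numerical estimate in~(3) is a slight variant of the paper's $e^x\le 1+2x$ route.
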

\begin{proof}

\noindent
\begin{enumerate}
\item
Suppose $k \leq d' <d$.
Let $A$ be a $k$ by $d$ matrix over $F_2$ which 
maximizes the probability that a random set of
$k$ of its columns forms a basis. This probability is
the average, over all choices of a $k$ by $d'$ submatrix $A'$
of $A$, of the probability that a random set of $k$ columns
of $A'$ forms a basis. The fact that
$$
		\frac{B(k,d')}{{{d '} \choose k}}
		\geq 
		\frac{B(k,d)}{{{d } \choose k}}
$$

		follows by
considering the submatrix $A'$ maximizing this
		probability. To prove the
		inequality $B(k,d)/{d \choose k} \geq P(k)$
	consider a random $k$ by $d$ matrix $A$ over $F_2$
		whose columns are chosen uniformly and independently
		in $F_2-\{0\}$. Each subset of $k$ columns of
		$A$ is a basis with probability $P(k)$ and the desired 
		inequality follows by linearity of expectation.
\item
For a $k$ by $d$ matrix $A$ of rank $k$ over $F_2$, let $A'$ denote
the $(k-d)$ by $d$ matrix whose rows form a basis of the subspace 
orthogonal to the row-space of $A$. If a set $I'$ of $(d-k)$
columns of $A'$ is of rank smaller than $d-k$ then there
is a nonzero linear combination of the rows of $A'$ 
which vanishes on these columns. This nonzero 
linear combination is orthogonal to the rows of $A$,
providing a nontrivial linear
relation of the columns $I=[d]-I'$ of $A$. This shows that if a set $I'$
of $d-k$ columns of $A'$ is not linearly independent, then the
set $I=[d]-I'$ of $k$-columns of $A$ is not linearly
independent. By symmetry the converse holds as well,
and the desired result follows by considering the matrices realizing
		$B(k,d)$
and $B(d-k,d)$.
\item
Let $A$ be a $k$ by $d$ matrix over $F_2$ with $B(k,d)$ nonsingular
$k$ by $k$ submatrices. It is clear that $A$ does not contain 
		the $0$-column (as it is easy to replace it and increase
		the number of nonsingular $k$ by $k$ submtarices).
Let $\{p_v: v \in F_2^k-\{0\}\}$
be the probability distribution assigning to each column
		of $A$ the same probability $1/d$. By Corollary \ref{c24}
if we select $k$ columns of $A$ according to this 
probability distribution (with repetition), the probability
we get a basis is at most $P(k)$. On the other hand this
probability is exactly $k!(B(k,d)/d^k$. Therefore
$ \frac{k! B(k,d)}{d^k} \leq P(k)$  implying that
$$
\frac{B(k,d)}{{d \choose k} } 
\leq \frac{P(k) d^k}{k!{d \choose k}}
\leq P(k) e^{k(k-1)/2(d-k+1)}  \leq P(k) [1+\frac{k^2}{d-k}].
$$
Here we used the fact that $\prod_{i=0}^{k-1}  (d/d-i)
< e^{k(k-1)/2(d-k+1)}$ and that $e^x \leq 1+2x$ for $x<1$.
\item
	Let $A$ be a $k$ by $d$ matrix with $B(k,d)$ $k$ by $k$ nonsingular
	submatrices. Every fixed column $c$ of $A$ can be contained in at most
	$B(k-1,d-1)$ such nonsigular matrices corresponding to the 
	$(k-1)$ by $(k-1)$ nonsingular submatrices of the $(k-1)$ by $(d-1)$
	matrix obtained from $A$ by removing $c$ and by replacing
	each column by its projection on the subspace orthogonal to $c$.
	The result thus follows by double counting.
\end{enumerate}
\end{proof}

\begin{coro}
	\label{c26}
	Put $B(d)=\max \{B(k,d): k \leq d$. Then $B(5)=5$, $B(6)=16$,
	$B(7)=28$ and $B(d) = (c+o(1)){d \choose \lfloor d/2 \rfloor}$,
	where $c$ is as in Theorem \ref{t13} and the $o(1)$-term tends to
	$0$ as $d$ tends to infinity.
\end{coro}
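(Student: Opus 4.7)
The plan is to derive both the small-$d$ values and the asymptotic formula from the four parts of Lemma \ref{l25} together with the limit $P(k) \to c$. By Lemma \ref{l25}(2) we have $B(k,d) = B(d-k,d)$, so it suffices throughout to consider $k \leq d/2$, with the boundary $B(d,d) = 1$ handled trivially.

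For the small cases $d \in \{5,6,7\}$ I would first compute the low-rank values directly: $B(1,d) = d$, and $B(2,d)$ reduces to an extremal problem over $F_2^2 - \{0\}$ (two columns of a $2 \times d$ matrix are linearly independent iff they are distinct nonzero vectors), yielding $B(2,4) = 5$, $B(2,5) = 8$, $B(2,6) = 12$, $B(2,7) = 16$. Then Lemma \ref{l25}(4) gives $B(3,5) \leq \lfloor 5 \cdot 5/3 \rfloor = 8$, $B(3,6) \leq \lfloor 6 \cdot 8/3 \rfloor = 16$, and $B(4,7) \leq \lfloor 7 \cdot 16/4 \rfloor = 28$. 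Matching lower bounds come from explicit column sets: all seven nonzero vectors of $F_2^3$ realize $B(3,7) = 7 \cdot 6 \cdot 4/3! = 28$; omitting any one vector removes exactly $12$ bases, leaving $16$, so $B(3,6) \geq 16$; and a $(2,2,1)$ column-multiplicity pattern in $F_2^2$ achieves $B(2,5) \geq 8$. Combined with the symmetry of Lemma \ref{l25}(2), these yield $B(5) = 8$, $B(6) = 16$, $B(7) = 28$.

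For the asymptotic statement the lower bound is immediate from Lemma \ref{l25}(1): taking $k = \lfloor d/2 \rfloor$, a matrix with uniformly random nonzero columns attains $B(\lfloor d/2 \rfloor, d) \geq P(\lfloor d/2 \rfloor)\binom{d}{\lfloor d/2 \rfloor}$, and $P(\lfloor d/2 \rfloor) \to c$. For the upper bound, fix any $k \leq d/2$ and pick an auxiliary parameter $j = j(d)$ with $j \to \infty$ and $j = o(\sqrt{d})$, for instance $j = \lfloor \log \log d \rfloor$. Iterating Lemma \ref{l25}(4) from $k$ down to $j$ telescopes to
\[
B(k,d) \leq \frac{\binom{d}{k}}{\binom{d-k+j}{j}} \, B(j, d-k+j).
\]
Since $d - k \geq d/2$, the hypothesis $d - k + j \geq j^2$ of Lemma \ref{l25}(3) is met, and that part yields $B(j, d-k+j) \leq P(j)(1 + j^2/(d-k)) \binom{d-k+j}{j}$. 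Combining the two bounds gives $B(k,d) \leq P(j)(1 + j^2/(d-k)) \binom{d}{k} = (c+o(1))\binom{d}{k} \leq (c+o(1))\binom{d}{\lfloor d/2 \rfloor}$ uniformly in the range $j \leq k \leq d/2$. The complementary range $k < j$ is negligible: $B(k,d) \leq \binom{d}{k} = O(d^j) = o(\binom{d}{\lfloor d/2 \rfloor})$.

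The main obstacle is the equality $B(3,6) = 16$, which underpins the upper bound $L(6) \leq 16$ of Theorem \ref{t13}. Applying Corollary \ref{c24} directly yields only $B(3,6) \leq \lfloor P(3) \cdot 6^3/3! \rfloor = \lfloor 864/49 \rfloor = 17$, so the sharper integer-rounded inequality of Lemma \ref{l25}(4) together with the exact value $B(2,5)=8$ is essential. In the asymptotic regime there is no comparable tightness issue: any $j(d) \to \infty$ with $j(d) = o(\sqrt{d})$ suffices, so the argument goes through comfortably.
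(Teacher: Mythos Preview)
Your proof is correct. The small-case computations and the asymptotic lower bound match the paper's argument essentially verbatim (you give a bit more detail on the lower-bound constructions for $B(3,6)$ and $B(3,7)$, which the paper leaves implicit). For the asymptotic upper bound you take a slightly different route: the paper fixes $k$ in the range $d/4\le k\le d/2$, uses the monotonicity of Lemma~\ref{l25}(1) to pass from $(k,d)$ to $(k,k+\log k)$, applies the symmetry of part~(2) to flip to $(\log k,\,k+\log k)$, and then invokes part~(3). You instead iterate part~(4) to descend from $(k,d)$ directly to $(j,d-k+j)$ with $j=j(d)\to\infty$, $j=o(\sqrt d)$, and then apply part~(3). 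Both reductions land in the same regime where part~(3) gives $P(j)(1+o(1))$, and both finish via $P(j)\to c$; the difference is cosmetic. Your closing remark that Corollary~\ref{c24} alone gives only $B(3,6)\le 17$, so that the integer rounding in part~(4) is genuinely needed for $B(6)=16$, is a nice observation the paper does not spell out.
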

\begin{proof}
	By Lemma \ref{l25}, part 2, $B(d)=B(k,d)$ for some
	$k \leq d/2$. For $d=5$ it is clear that $B(1,5) \leq {5 \choose 1}
	=5<8$. $B(2,5)$ is the number of pairs of distinct columns
	of a $2$ by $5$ binary matrix in which  every column is 
	one of the three nonzero vectors of $F_2^2-\{0\}$. This is
	clearly $8$. The computation of $B(6)=B(3,6)$ and of $B(7)=
	B(3,7)$ is also  simple and is obtained by any matrix with distinct
	columns in $F_2^3-\{0\}$. (The upper bounds for these quantities
	also follow by Lemma \ref{l25}, part 4 and the fact that 
	$B(2,4)=5$).

	To estimate 
	$B(d)$ for large $d$ observe, first, that by Lemma \ref{l25}, 
	part 1 
	$$
	B(d) \geq B(\lfloor d/2 \rfloor,d) \geq
	P(\lfloor d/2 \rfloor){d \choose {\lfloor d/2 \rfloor}}>
	c{d \choose {\lfloor d/2 \rfloor}}.
	$$
Next, note 
	that for, say $k<d/4$,  $B(k,d)\leq {d \choose {d/4}}$ is (much)
	smaller than $c {d \choose {\lfloor d/2 \rfloor}}$, so 
	$B(d)=B(k,d)$ for some $d/4 \leq k \leq d/2$. By Lemma
	\ref{l25}, parts 1, 2 and 3, for any such $k$ (and
	$d \geq k+\log k$):
$$
	\frac{B(k,d)}{{d \choose k}}
	\leq \frac{B(k,k+\log k)}{{{k+ \log k} \choose k}}
	=\frac{B( \log k, k+  \log k)}{{{k+ \log k} \choose { \log k}}}
	\leq P( \log k) (1+\frac{ \log^2 k}{k})
	$$
	$$
	\leq c(1+O(\frac{ \log k}{k}))(1+\frac{ \log^2 k}{k})
	\leq c (1+O(\frac{\log^2 d}{d})),
	$$
	where in the penultimate inequality we used (\ref{e23}).

	Therefore for each $k$ in this range
	$$
	B(k,d) \leq c{d \choose k}(1+O(\frac{\log^2 d}{d})),
	$$
	completing the proof.
\end{proof}

We are now ready to prove Theorem \ref{t13}.
\begin{proof}
	The relevant erasure codes are the ones constructed in \cite{EIL},
	the novelty here is only in their analysis. 
	Here is the description of the codes 
	for a given length $n$.
	Let $C_0$ be the set consisting of the unique vector of weight
	$0$ of $Q_n$.
	For each fixed $r$, $1 \leq r \leq n$, assign to each coordinate
	$i \in [n]$ a uniformly chosen random vector $v_i$ in $F_2^r-\{0\}$.
	Let $C_r$ denote the set of all binary vectors $x$ of length $n$ and
	Hamming weight $r$ for which the $r$ vectors $v_i$ corresponding to
 all coordinates $i$ with $x_i =1$ form a basis of $F_2^r$. Note that 
	the expected cardinality of $C_r$ is ${n \choose r}P(r)
	> c { n \choose r}$, where $c$ is the limit defined in
	Theorem \ref{t13} (which is roughly $0.29$).

	Fix a choice of vectors $v_i$  for each $r$ so that the resulting
	set $C_r$ is of cardinality larger than $c { n \choose r}$
	and let $\CC$ be the union of all these sets. 
	Thus $|\CC|>c \cdot 2^n$.

	Given $d < n$, partition $\CC$ into $d+1$ pairwise disjoint
	sets, where for each $0 \leq i \leq d$ the $i$-th sets consists
	of all vectors of $\CC$ whose Hamming weight is $i \bmod (d+1)$.
	Let $\CC(d)$ denote the largest among those. Note that 
	$|\CC(d)|>(c /(d+1))2^n$ contains a constant fraction of all
	binary vectors of length $n$. 
	To complete the proof we prove an upper bound for
	the number of vectors of $\CC(d)$ in any subcube of dimension $d$
	of $Q_n$.

	Fix a subcube $D$ of dimension $d$, and let $I \subset [n]$ 
	be the set of the $d$ coordinates that vary in the subcube.
	Observe that by the choice of the Hamming weights 
	of the vectors in $\CC(d)$, $D$ can contain only
	vectors of $\CC(d)$ of one specific Hamming weight. Denote 
	this weight by $r$. Let $x$ be the common projection of all points
	of $D$ on $[n]-I$ and suppose its Hamming weight is $r-k$.
	Thus, each of the projections $y$ of
	all the vectors of $\CC(d)$ that lie in $D$ on $I$
	has weight $k$. Let $v_1,v_2, 
	\ldots ,v_{r-k}$ be the binary vectors in $F_2^r-\{0\}$ 
	that correspond to the indices $i$ where $x_i=1$. Then, by the
	construction of $C(r)$, these vectors are linearly independent. 
	Moreover, for any vector $y$ that appears as a projection above,
	the set of $k$ vectors $v_j \in F_2^r-\{0\}$ that correspond
	to the coordinates $j$ in which $y_j=1$ complete the vectors
	$v_1,v_2, \ldots ,v_{r-k}$ to a basis of $F_2^r$. This means
	that these vectors form a basis of the space of cosets of 
	$W=\mbox{span}(v_1, v_2, \ldots ,v_{r-k})$ in $F_2^r$.
	This space is isomorphic
	to $F_2^k$. It follows that the number of such projections
	$y$ is at most $B(k,d)$.

	We have thus proved that the number of vectors of $\CC(d)$ that 
	lie in $D$ is at most the maximum, over $k \leq d$, of the quantity
	$B(k,d)$, that is, at most $B(d)$. 
	The desired upper bound thus follows from the last Corollary.
\end{proof}

\subsection{Lagrangians}

Some of the discussion in the previous subsection is equivalent to
the computation of the Lagrangians of certain natural hypergraphs.
Although this is not needed for the results here, we 
briefly describe the connection
which may be of independent interest.

The Lagrangian Polynomial of a $t$-uniform hypergraph $H=(V,E)$ on
a vertex set $V=\{1,2, \ldots ,n\}$ is the polynomial
$$
P_H(x_1,x_2, \ldots ,x_n)=\sum_{e \in E} \prod_{j \in e} x_j.
$$
The Lagrangian $\lambda(H)$ of $H$ is the maximum value of 
$P_H(x_1, \ldots ,x_n)$ over the simplex $\{x_i \geq 0, \sum_i x_i=1\}$
(this maximum is attained as the simplex is compact). 

Lagrangians of hypergraphs  were first considered by Frankl and F\"uredi
\cite{FF} and by Sidorenko \cite{Si}, 
extending the application of this notion for graphs, initiated 
by Motzkin and Straus \cite{MS}.

For each $t  \geq 1$ let $B_t$ denote 
the $t$-uniform hypergraph on the vertex
set $V=F_2^t-\{0\}$ of the $2^t-1$ nonzero elements of the vector
space of dimension $t$ over $F_2$, whose  edges are all bases of 
this vector space.
Let $\lambda(B_t)$ denote the Lagrangian of this hypergraph. Trivially
$\lambda(B_1)=1$ and $\lambda(B_2)=1/3$.  By Corollary \ref{c24}
for every fixed $t$, the value of the Lagrangian
of $B_t$ satisfies $\lambda(B_t)=P(t)/t!$. Therefore
$\lambda(B_t)=(c+o(1))/t!$ where $c$ is as in Theorem \ref{t13}
and the $o(1)$-term tends to $0$ as $t$ tends to infinity.
\section{Concluding remarks}
\begin{itemize}
	\item
		By Theorem \ref{t13} $L(5)=8$.  Therefore,
		for any arbitrarily small $\eps>0$, any set of
		at least an $\eps$-fraction
		of the vertices of the $n$-cube for $n>n(\eps)$ 
		contains at least $8$ vertices in some $5$-dimensional
		subcube. On the other hand, there is  a set of
		at least $c/6> 0.04$-fraction of the vertices that does 
		not contain more than $8$ vertices of each such subcube.
		It is easy to improve this lower bound to $c/4>0.07$,
		since we can take the union of the subsets $C(r)$ for
		all Hamming weights $r$ congruent to a constant modulo
		$4$, instead of a constant modulo $6$. It is easy to check
		that this still contains at most $8$ vertices of any
		$5$-subcube, since the sum of cardinalities
		of any two quantitites $B(k,5)$ for values of $k$ 
		that differ by at least $4$ is at most $8$. Similarly, for
		large $d$ and any small $\eps>0$, the code containing all
		collections $C(r)$ for Hamming weights $r$ congruent to
		a constant modulo $b(\eps) \sqrt d$ has a fraction
		of $\Omega_{\eps}(1/\sqrt d)$ of all vertices of the
		cube and contains 
		at most
		$(c+\eps){d \choose {\lfloor d/2 \rfloor}}$
		vertices each $d$-subcube.
	\item
		As mentioned in the proof of Corollary \ref{c26} it
		is not difficult to find the exact values of 
		$B(6)=16$ and $B(7)=28$. With a bit more work one can
		determine $B(d)$ precisely for larger (small) values of
		$d$, but since there is no reason to believe that these
		provide a tight bound for $L(d)$ we have not done that.
	\item
		The problem of determining the precise value of $L(d)$
		for $d>5$ remains open. It will be interesting to
		close the gap between the upper and lower bounds for these
		quantities. Another problem is the estimation of the largest
		possible cardinality of a $(d,L(d),n)$-code. As mentioned
		above, for $d=5$ there is a $(5,8,n)$-code containing more
		than $c/4 \cdot 2^n > 0.07 \cdot 2^n$ 
		of the binary vectors of length $n$,
		but there is no reason to believe that this is tight.
		The analogous problem for $d=4$, that is, determining the
		maximum possible fraction of the set of $n$-vectors 
		in a $(4,5,n)$-code, is also
		open. The lower bound here is $c/3>0.09$ and the trivial
		upper bound is $5/16$. For smaller values of $d$ the 
		analogous problem is not difficult. The even vectors are
		$1/2$ of the vectors, and form a $(1,1,n)$-code and also
		a $(2,2,n)$-code, and $1/2$ is clearly optimal here. For
		$d=3$ the collection of all vectors of Hamming weight constant
		modulo $3$ provide a $(3,3,n)$-code with at least $1/3$ of
		all vectors. This $1/3$ is asymptotically optimal by 
		the following argument. Let $\CC$ be a collection of 
		binary vectors of length $n$. If there are more than 
		$2^{n-1}/n$ binary vectors $v$ of length $n-1$
		so that both $v0$ and $v1$ are in $\CC$, then there are
		two such vectors $v,v'$ which differ by at most $2$ 
		coordinates, and in this case $\{v0,v1,v'0,v'1\}$ all lie
		in the same $3$-cube, showing that $\CC$ is not
		a  $(3,3,n)$-code. If not, and, say, $|\CC|/2^n > (1/3+1/n)$
		then the projection of $\CC$ on the first $(n-1)$-coordinates
		is of cardinality exceeding $\lceil 2^n/3 \rceil $. By the 
		result of \cite{Ko} and \cite{JE} this projection contains
		a full $2$-cube, implying that $\CC$ contains at least
		$4$ points in a $3$-cube and showing it is not a
		$(3,3,n)$-code.
\end{itemize}
\vspace{0.2cm}

\noindent
{\bf Acknowledgment} I would like to thank Matija Buci\'c,  
Maria-Romina Ivan and Robert Johnson for helpful comments.

\end{document}